\newcommand{\lebesgue}{\lambda\mspace{-7mu}\lambda}
\newcommand{\Prob}{\mathds{P}}
\newcommand{\E}{\mathds{E}}
\newcommand{\V}{\mathds{V}}
\newcommand{\R}{\mathbb{R}}
\newcommand{\Z}{\mathbb{Z}}
\newcommand{\N}{\mathbb{N}}
\newcommand{\abs}[1]{|{#1}|} 
\newcommand{\bigabs}[1]{\left|{#1}\right|} 
\newcommand{\one}{\mathds{1}}
\newcommand{\Bcal}{\mathcal{B}}
\newcommand{\Gcal}{\mathcal{G}}
\newcommand{\Ical}{\mathcal{I}}
\newcommand{\Ncal}{\mathcal{N}}
\newcommand{\Lcal}{\mathcal{L}}
\newcommand*{\defeq}{\mathrel{\vcenter{\baselineskip0.5ex \lineskiplimit0pt
                     \hbox{\scriptsize.}\hbox{\scriptsize.}}}%
                     =}
\newcommand{\de}{\text{d}}
\newcommand{\norm}[1]{\|#1\|}
\DeclareMathOperator{\diag}{diag}
\theoremstyle{plain}
\newtheorem{lemma}{Lemma}
\newtheorem{theorem}[lemma]{Theorem}
\newtheorem{corollary}[lemma]{Corollary}
\theoremstyle{definition}
\theoremstyle{remark}
\definecolor{darkblue}{rgb}{.1, 0.1,.8}
\definecolor{darkgreen}{rgb}{0,0.8,0.2}
\definecolor{darkred}{rgb}{.8, .1,.1}
\begin{document}
%\today
%\bibliographystyle{acm}
\title[Interval Type Local Laws]
{Interval Type Local Limit Theorems for Lattice Type Random Variables and Distributions}
\author[Michael Fleermann]{Michael Fleermann}
\author[Werner Kirsch]{Werner Kirsch}
\author[Gabor Toth]{Gabor Toth}
\begin{abstract}
In this paper, we propose a new interpretation of local limit theorems for univariate and multivariate distributions on lattices. We show that -- given a local limit theorem in the standard sense -- the distributions are approximated well by the limit distribution, uniformly on intervals of possibly decaying length. We identify the maximally allowable decay speed of the interval lengths. Further, we show that for continuous distributions, the interval type local law holds without any decay speed restrictions on the interval lengths. We show that various examples fit within this framework, such as standardized sums of i.i.d.\ random vectors or correlated random vectors induced by multidimensional spin models from statistical mechanics.
\end{abstract}
\keywords{local limit theorem, interval type, lattice type, multivariate}
\subjclass[2010]{Primary: 60F05. Secondary: 60G50} 
\maketitle

\section{Introduction}
Assume $f:\R\to\R$ is a continuous probability density function with respect to the Lebesgue measure $\lebesgue$ on $(\R,\Bcal)$, where $\Bcal$ denotes the Borel $\sigma$-algebra over $\R$. Define $\mu\defeq f\lebesgue$, then if $\mu_n\to\mu$ weakly, we find that the distribution functions $F_{\mu_n}$ of $\mu_n$ converge uniformly to the distribution function $F_{\mu}$ of $\mu$. A straightforward consequence  of this (e.g. \cite[172]{Fleermann:2019:a}) is that
\begin{equation}
\label{eq:absolutedifference}
\sup_{I\in\Ical(\R)} \abs{\mu_n(I)-\mu(I)} \xrightarrow[n\to\infty]{} 0,
\end{equation}
where if $M\subseteq \R$, we define $\Ical(M)\defeq \{I\subseteq M\,|\, I \text{ is an interval with }\abs{I}>0\}$, where $\abs{I}$ denotes the length of the interval. From \eqref{eq:absolutedifference} it does not, however, follow that $\mu_n$ is \emph{well-approximated} by $\mu$ uniformly over all intervals $I\subseteq\R$. The reason is that for small intervals $I$, that is, \emph{locally}, $\mu_n(I)$ and $\mu(I)$ are both close to zero anyway, hence also their absolute difference. Therefore, even if the supremum in \eqref{eq:absolutedifference} is bounded by $10^{-10}$, $\mu_n(I)$ could well be $10^{5}$ times larger than $\mu(I)$, hence the latter can hardly be considered a good approximation of the former.
With this in mind, an improvement of \eqref{eq:absolutedifference} would be a statement of the form
\begin{equation}
\label{eq:relativedifference}
\sup_{I\in\Ical(\R)} \bigabs{\frac{\mu_n(I)}{\mu(I)}-1} \xrightarrow[n\to\infty]{} 0.
\end{equation}

Obviously, a statement of the form \eqref{eq:relativedifference} only makes sense for intervals $I$ that lie in the support of $\mu$, which is given by the support of $f$. So, if $f>0$ on some interval $[a,b]\subseteq\R$ (and then $f\geq c>0$ on $[a,b]$ for some $c>0$ by continuity) we obtain
\begin{equation}
\label{eq:relativeintervaldifference}
\sup_{I\in\Ical([a,b])} \bigabs{\frac{\mu_n(I)}{\mu(I)}-1} \xrightarrow[n\to\infty]{} 0.
\end{equation}
We call a statement of the form \eqref{eq:relativeintervaldifference} an \emph{interval type local limit theorem}, since it allows to conclude relative approximation on smaller and smaller intervals. Note that \eqref{eq:relativeintervaldifference} allows us to dynamically choose a sequence of intervals $[a,b]\supseteq I_1\supseteq I_2\supseteq \ldots$ and still find $\mu_n(I_n)/\mu(I_n) \to 1$ as $n\to\infty$.

In probability theory, many interesting weak convergence results pertain to weak convergence of (the distributions of) appropriately normalized sums, such as in the central limit theorem. A second class of prominent weak convergence results pertain to weak convergence of empirical distributions, such as Wigner's semicircle law in random matrix theory, which postulates convergence of the empirical distribution of eigenvalues of random matrices. There, local limit theorems related to \eqref{eq:relativeintervaldifference} (see \cite{Erdos:Knowles:Yau:Yin:2013:b}, \cite{Tao:Vu:2012} and the relation to \eqref{eq:relativeintervaldifference} in \cite{Fleermann:2019:a}) led to breakthroughs in the analysis of the eigenvalue behavior of random matrices. Since empirical distributions are random discrete distributions, a statement as in \eqref{eq:relativeintervaldifference} cannot hold (almost surely, say), since we can always find a gap between realizations of random variables, and for an interval $I$ within this gap it holds $\mu_n(I)=0$. Therefore, a minimal interval length $m_n$ is required for \eqref{eq:relativeintervaldifference} to hold, and it is an interesting question how fast $m_n$ is allowed to converge to zero. For example, in the local version of Wigner's semicircle law, this question is not answered tightly, yet.

The interval type formulation within the setting of random matrices (\cite[170]{Fleermann:2019:a}) motivated the current work, in which we return to the first class of weak convergence results mentioned above, which pertain to normalized sums of random variables.  In this classical setting, (standard) local limit theorems are well-known, see \cite{Durrett:2019, Petrov:1975, Gnedenko:Kolmogorov:1968} or the famous Moivre-Laplace theorem as in \cite{Feller:1968}. The driving factor that led us to state a multivariate version of our results was motivated by recent findings in \cite{Fleermann:Kirsch:Toth:2020:a}, where a multivariate local limit theorem was shown, where limit theorems in \cite{Kirsch:Toth} are used.

In this paper, we introduce the interval type formulation to the classical setting. We show that for a statement as in \eqref{eq:relativeintervaldifference}, a minimal interval length is required for the case of discrete random variables (and we identify this minimal rate), while  \eqref{eq:relativeintervaldifference} holds as is for continuous random variables. Our results do not only cover the univariate case, but also multivariate settings. In addition, since we take the local limit theorem as a starting point to show the interval type local limit theorem, no requirements on the correlations of the random variables are made. In particular, in our Corollary~\ref{cor:CW} we treat multivariate correlated random variables, for which a (standard) local limit theorem has recently been derived \cite{Fleermann:Kirsch:Toth:2020:a}.

\section{Setup and Results}

We consider a sequence of probability measures $(\mu_n)_n$ on $\R^d$, where we assume that for each $n\in\N$, $\mu_n$ has a grid distribution on $\Gcal(n)\defeq v(n)+w(n)\circ\Z^d$, where $v(n)\in\R^d$ and $w(n)\in\R^d_{>0}$ for all $n\in\N$, and where $\circ$ denotes the Hadamard product, that is, componentwise multiplication. The parameter $w(n)$ is called \emph{width} of the grid $\Gcal(n)$. 
Denote by $\lebesgue^d$ the Lebesgue measure on $\R^d$, and by $f:\R^d\to\R$ a continuous probability density function, that is, $f\geq 0$ and $\int f\de\lebesgue^d =1$. We say that $\mu_n$ converges weakly to $\mu\defeq f\lebesgue$, if for all continuous and bounded functions $g:\R^d\to\R$ it holds that
\[
\int_{\R^d} g \de\mu_n \xrightarrow[n\to\infty]{} \int_{\R^d}g\de\mu.
\]
Further, we say that $\mu_n$ converges locally weakly to $\mu$, if 
\begin{equation}
\label{eq:locallaw}
\sup_{x\in\Gcal(n)}\bigabs{\frac{1}{\bar{w}(n)}\mu_n(x)-f(x)}\xrightarrow[n\to\infty]{} 0,
\end{equation}
where $\bar{w}(n)\defeq \prod w_{\delta}(n)$, and where $\mu_n(x)\defeq\mu_n(\{x\})$ for all $x\in\R^d$.

For our first theorem, we assume that \eqref{eq:locallaw} holds, which takes the form of a local limit theorem in applications, see Corollaries~\ref{cor:iidunivariate} and~\ref{cor:CW}. Usually, for obtaining \eqref{eq:locallaw} one needs additional assumptions on the width $w(n)$, namely that $w(n)\to 0 \in\R^d$ as $n\to\infty$ and that the width is chosen as large as possible, which means that for all $n\in\N$ and $\delta\in\{1,\ldots,d\}$, there is $g(\delta)\in\Gcal(n)$ such that $\mu_n(g(\delta))\cdot\mu_n(g(\delta)+w_{\delta}(n)\cdot e_{\delta}) > 0$, where $e_{\delta}$ denotes the $\delta$-th unit vector in $\R^d$. In the proof of our first theorem, though, we do not formally need these requirements.

The setup we consider is very general, but we have two specific applications in mind that motivated our analysis:
\begin{enumerate}
\item Consider $d$-dimensional random vectors $(X^{(n)}_i)_{1\leq i \leq n<\infty}$, where 
    \[
    X^{(n)}_i = \left(X^{(n)}_{i,1},\ldots,X^{(n)}_{i,d}\right),
    \] 
    with a multidimensional lattice distribution. That is, there exist real vectors $v\geq 0$, $w>0$, such that each $X^{(n)}_i$ assumes values on the grid $\Gcal\defeq v+w\circ\Z^d$. To allow for a different scaling in each dimension, let $c(n)$ be a $d$-dimensional scaling vector, for example $c_{\delta}(n)=n^{-1/2}$ for all $\delta\in\{1,\ldots,d\}$.  Then define
\[
S_n\defeq \sum_{i=1}^n X^{(n)}_i \qquad \text{and} \qquad S_n^{*}\defeq c(n)\circ S_n.
\]
Note that $S_n^{*}$ assumes values on the grid 
\[
\Gcal(n) = v(n) + w(n)\circ\Z^d,
\]
where 
\[
v(n)=n\cdot v\circ c(n)\quad\text{and}\quad w(n)=w\circ c(n).  
\]
Here, the discrete distribution on $\Gcal(n)$ is given by $\mu_n\defeq\Prob^{S^*_n}$. This is a classical setting, in which for each dimension the same number of random variables are averaged.
\item Consider for all $n\in\N$ a collection of random variables
\[ X^{(n)}_{11},X^{(n)}_{12},\ldots,X^{(n)}_{1n_1},X^{(n)}_{21},\ldots,X^{(n)}_{2n_2},\ldots,X^{(n)}_{d1},\ldots,X^{(n)}_{dn_d},
 \]
 where $n_{\delta}=n_{\delta}(n)\in\N$ for all $n$, but this dependence is suppressed notationally. In particular, in each dimension $\delta$ we allow for a different number $n_{\delta}$ of random variables $X^{(n)}_{\delta 1},X^{(n)}_{\delta 2},\ldots,X^{(n)}_{\delta n_{\delta}}$, which we assume take values on the same grid $\Gcal_{\delta}= v_{\delta}+w_{\delta}\Z$. Again, admitting different scaling in each dimension, let $c(n)$ be a $d$-dimensional scaling vector, for example, $c_{\delta}(n) = (n_{\delta})^{-1/2}$ for all $\delta\in\{1,\ldots,d\}$. Define
 \[
S_n\defeq \left(\sum_{i=1}^{n_1} X^{(n)}_{1i},\ldots, \sum_{i=1}^{n_{d}} X^{(n)}_{1d}\right) \qquad \text{and} \qquad S_n^{*}\defeq c(n)\circ S_n.
\]
Now, $S_n^{*}$ assumes values on the grid 
\[
\Gcal(n) = v(n) + w(n)\circ\Z^d,
\] 
where 
\[
 v_{\delta}(n)=v_{\delta}\cdot n_{\delta}\cdot c_{\delta}(n)\quad\text{and}\quad w_{\delta}(n)=w_{\delta}\cdot c_{\delta}(n),  
\]
so again the distribution of interest is $\mu_n\defeq\Prob^{S_n^*}$.
\end{enumerate}

Before stating our main theorem, we explain some notation. If $a$ and $b$ are vectors, comparisons such as $a\leq b$ are always understood to be componentwise. In particular, the set $[a,b]\defeq\{x\in\R^d: a\leq x \leq b\}$ defines a multidimensional interval. If $I$ is a $d$-dimensional interval, $I=\prod I_{\delta}$, we define its length $\abs{I}\defeq (\abs{I_1},\ldots,\abs{I_d})$, where for an interval $J\subseteq\R, \abs{J}$ denotes its length. Now if $M\subseteq\R^d$ is any set, then we define
\[
\Ical(M) \defeq \{I\subseteq M, I \text{ is a $d$-dim.\ interval with }\abs{I}>0\}
\]
and if $c$ is a $d$-dimensional vector with $c>0$, we set
\[
\Ical_c(M) \defeq \{I\subseteq M, I \text{ is a $d$-dim.\ interval with } \abs{I}\geq c\}
\]
Our first theorem introduces a new notion of a local limit law as follows:

\begin{theorem}
\label{thm:intervallocallaw}
Assume that the local limit theorem \eqref{eq:locallaw} holds. Then let $[a,b]$ be a non-degenerate compact $d$-dimensional interval, so that there is a $c>0$ with $f\geq c$ on $[a,b]$. Let $(m(n))_n$ be a sequence of $d$-dimensional minimal length parameter vector satisfying 
\begin{equation}
\label{eq:mncondition}
\frac{m_{\delta}(n)}{w_{\delta}(n)} \longrightarrow\infty \text{ for all } \delta\in\{1,\ldots,d\}.	
\end{equation}
Then we obtain
\begin{equation}
\label{eq:intervallocallaw}	
\sup_{I\in\Ical_{m(n)}([a,b])}\bigabs{\frac{\mu_n(I)}{\mu(I)}-1} \xrightarrow[n\to\infty]{} 0.
\end{equation}
Further, if  $(m(n))_n$ is a sequence for which \eqref{eq:mncondition} does \emph{not} hold, also \eqref{eq:intervallocallaw} does not hold.
\end{theorem}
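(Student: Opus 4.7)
The plan is to reduce \eqref{eq:intervallocallaw} to the pointwise local law \eqref{eq:locallaw} via a Riemann-sum argument. Setting $\vep_n \defeq \sup_{x\in\Gcal(n)}\bigabs{\bar{w}(n)^{-1}\mu_n(x)-f(x)}$ and $N_n(I) \defeq \abs{I \cap \Gcal(n)}$, I would start from the tautology
\[
\mu_n(I) \;=\; \bar{w}(n) \sum_{x \in I \cap \Gcal(n)} f(x) \;+\; \bar{w}(n)\, N_n(I)\, \theta_n(I), \qquad \abs{\theta_n(I)} \leq \vep_n,
\]
and split the task into two uniform estimates over $I \in \Ical_{m(n)}([a,b])$: (a) a counting estimate $\bar{w}(n) N_n(I) = \lebesgue^d(I)(1+o(1))$, and (b) a Riemann-sum estimate $\bar{w}(n)\sum_{x \in I \cap \Gcal(n)} f(x) = \mu(I) + o(\lebesgue^d(I))$. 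Combining (a), (b), and the fact that $f \geq c > 0$ on $[a,b]$ forces $\mu(I) \geq c\,\lebesgue^d(I)$ turns the two $o$-terms into $o(\mu(I))$ uniformly in $I$, which is exactly \eqref{eq:intervallocallaw}.

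To prove (a), I would use that $\Gcal(n)$ is a product grid of spacings $w_1(n),\ldots,w_d(n)$, so $N_n(I) = \prod_\delta N_n^{(\delta)}(I_\delta)$, and elementary counting in each coordinate gives $N_n^{(\delta)}(I_\delta) = \abs{I_\delta}/w_\delta(n) + O(1)$; together with $\abs{I_\delta} \geq m_\delta(n)$ and \eqref{eq:mncondition} this yields $w_\delta(n) N_n^{(\delta)}(I_\delta) = \abs{I_\delta}(1+o(1))$, and the product gives (a). For (b) I would pick a modulus of continuity $\omega_f$ for $f$ on a compact neighborhood of $[a,b]$ and write $H_n(I)$ for the union of $\bar{w}(n)$-volume grid cells centered at the points of $I \cap \Gcal(n)$; then
\[
\bar{w}(n)\sum_{x \in I \cap \Gcal(n)} f(x) \;=\; \int_{H_n(I)} f\,\de\lebesgue^d \;+\; O\!\bigl(\omega_f(\|w(n)\|_\infty)\,\lebesgue^d(I)\bigr),
\]
and the symmetric difference $H_n(I)\triangle I$ is a boundary shell of $d$-volume $O\bigl(\lebesgue^d(I)\sum_\delta w_\delta(n)/\abs{I_\delta}\bigr) = o(\lebesgue^d(I))$ by \eqref{eq:mncondition}, which finally gives $\int_{H_n(I)} f\,\de\lebesgue^d = \mu(I) + o(\lebesgue^d(I))$. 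The main obstacle I anticipate is keeping all these error estimates uniform over the family $\Ical_{m(n)}([a,b])$; this is precisely where \eqref{eq:mncondition} is used, in the form $w_\delta(n)/\abs{I_\delta} \leq w_\delta(n)/m_\delta(n) \to 0$.

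For the converse, I would suppose \eqref{eq:mncondition} fails for some coordinate $\delta$ and pass to a subsequence along which $r_n \defeq m_\delta(n)/w_\delta(n) \to r \in [0,\infty)$. The plan is to exhibit two intervals $I^{(1)}_n, I^{(2)}_n \in \Ical_{m(n)}([a,b])$ that agree in every coordinate other than $\delta$, whose $\delta$-sides both have length $m_\delta(n)$ but are translated so as to contain numbers of grid projections $k^{(1)}_n, k^{(2)}_n$ differing by exactly $1$ (always possible, by sliding one endpoint across a grid point). Choosing the remaining sides comfortably larger than the $m_{\delta'}(n)$ so that (a)--(b) apply there, a direct computation along the lines of (a)--(b) gives
\[
\mu_n\bigl(I^{(j)}_n\bigr)/\mu\bigl(I^{(j)}_n\bigr) = \bigl(k^{(j)}_n/r_n\bigr)(1+o(1)).
\]
If $r > 0$ the two limits differ by $1/r \neq 0$, so at most one can equal $1$; if $r = 0$ the interval containing $k^{(j)}_n = 0$ already has $\mu_n = 0$ while $\mu > 0$, so the ratio is $0$. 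In either case the supremum in \eqref{eq:intervallocallaw} stays bounded away from $0$ along the subsequence, which completes the proof.
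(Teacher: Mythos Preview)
Your argument is correct and complete in all essential points; it is, however, organized rather differently from the paper's proof.

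For the direct implication, the paper introduces an intermediate object, the histogram density $h_n=K_n*\mu_n$ and the associated measure $H_n=h_n\lebesgue^d$, and proceeds in three steps: first $h_n/f\to 1$ uniformly on $[a,b]$, then $H_n(I)/\mu(I)\to 1$ uniformly over \emph{all} intervals $I\subseteq[a,b]$ (no length restriction), and finally $\mu_n(I)/H_n(I)\to 1$ uniformly over $\Ical_{m(n)}([a,b])$ by a boundary-cell count. Your Riemann-sum decomposition collapses these three steps into one: the role of $h_n$ is played implicitly by the cell sum $\bar w(n)\sum_{x\in I\cap\Gcal(n)}f(x)$, your $H_n(I)$ is exactly the union of grid cells that the paper also uses, and your symmetric-difference bound $\lebesgue^d(H_n(I)\triangle I)=O\bigl(\lebesgue^d(I)\sum_\delta w_\delta(n)/|I_\delta|\bigr)$ is the counterpart of the paper's Step~3 estimate. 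Your route is more direct; the paper's has the side benefit that its Step~2 is precisely the continuous-density interval law (their Theorem on densities), so the lattice case is visibly a perturbation of the continuous one.

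For the converse, the paper builds a \emph{single} interval whose $\delta^*$-side is the open segment between two grid points $l$ apart, so that exactly $l-1$ interior grid points contribute to $\mu_n$, while the histogram $H_n$ additionally picks up half-mass from the two boundary points; comparing $\mu_n(I)/H_n(I)$ then gives a fixed upper bound strictly below $1$, and Step~2 converts $H_n$ back to $\mu$. Your two-interval trick (same $\delta$-length $m_\delta(n)$, grid-point counts $k^{(1)}_n$ and $k^{(1)}_n+1$) is an equally valid alternative. One small point to make explicit: in the critical coordinate $\delta$ your estimates (a)--(b) do not apply as stated, so the asserted asymptotic $\mu_n(I^{(j)}_n)/\mu(I^{(j)}_n)=(k^{(j)}_n/r_n)(1+o(1))$ relies on the extra observation that $m_\delta(n)=r_n w_\delta(n)\to 0$, whence $f$ (or the partial integral $g(x_\delta)=\int_P f(x_\delta,\cdot)\,\diff x'$) is asymptotically constant on the $\delta$-side. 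That is exactly what produces the clean factor $k^{(j)}_n/r_n$. Both proofs tacitly use $w(n)\to 0$ at the analogous spot (the paper via $\omega_f(\norm{w(n)/2}_1)\to 0$ in Step~1).
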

The interpretation of Theorem~\ref{thm:intervallocallaw} is that even when zooming in onto smaller and smaller intervals whose edge lengths may decay at a rate of order $\gg w_{\delta}(n)$, the measure $\mu_n$ is well-approximated by the limit distribution $\mu$. 
\begin{proof}
The proof of the first statement of the theorem is divided into three steps:\newline
\underline{Step 1:} In this step we show that the histogram estimator of the grid distribution $\mu_n$ converges relatively uniformly to $f$ on $[a,b]$.
To this end, we define the $d$-dimensional kernels $K_n(x)\defeq \frac{1}{\bar{w}(n)}\one_{(-w(n)/2,w(n)/2]}(x)$ for all $x\in\R^d$. The kernel is used to spread the probability mass of each grid point onto its surrounding box. We define the $d$-dimensional histogram of $\mu_n$ as the density
\[
\forall\,x\in\R: h_n(x) \defeq (K_n \ast \mu_n) (x) = \sum_{y\in\Gcal(n)}\frac{1}{\bar{w}(n)}\mu_n(y)\one_{\left(y-\frac{w(n)}{2},y+\frac{w(n)}{2}\right]}(x).
\]
Since $f$ is continuous and $f\geq c>0$ on $[a,b]$, we can find $a'<a$, $b'>b$ and $c'\in (0,c)$ such that $f\geq c'$ on $[a',b']$. Then there is an $N\in\N$ such that $w(N)/2 < \min(a-a',b'-b)$, where the minimum is taken componentwise. Note that the enlargement of $[a,b]$ to $[a',b']$ guarantees that for any $x\in[a,b]$ there is a $y\in[a',b']$ with $\abs{x-y}\leq w(n)/2$, where the absolute value is taken componentwise. Then we find for all $n\geq N$ and $x\in[a,b]$ that 
\begin{equation}
\label{eq:onlyonesummand}
\frac{h_n(x)}{f(x)} = \sum_{\substack{y\in\Gcal(n)\\ y\in[a',b']}}\frac{\frac{1}{\bar{w}(n)}\mu_n(y)}{f(y)}\frac{f(y)}{f(x)}\one_{\left(y-\frac{w(n)}{2},y+\frac{w(n)}{2}\right]}(x).
\end{equation}
We analyze the factors in the summands of \eqref{eq:onlyonesummand}: By \eqref{eq:locallaw}, 
\[
\frac{\frac{1}{\bar{w}}\mu_n(y)}{f(y)} \xrightarrow[n\to\infty]{} 1
\]
uniformly for $y$ over $\Gcal(n)\cap[a',b']$. In addition, since $f$ is uniformly continuous over $[a',b']$, for all $y\in [a',b']$ and $x\in [a,b]$ with $\abs{x-y}\leq w(n)/2$ componentwise, we find
\[
\bigabs{\frac{f(y)}{f(x)}-1} \leq \frac{\abs{f(y)-f(x)}}{c'}\leq \frac{\omega_f(\norm{w(n)/2}_1)}{c'},
\]
where $\omega_f(\cdot)$ denotes the modulus of continuity of $f$.
 Since $\omega_f(\epsilon)\to 0$ as $\epsilon\to 0$, the terms $f(y)/f(x)$ in \eqref{eq:onlyonesummand} converge to 1 uniformly or vanish due to the indicator. But noting that in the finite sum \eqref{eq:onlyonesummand}, only one summand survives for any given $x$, we have thus argued that
\begin{equation*}
\label{eq:firststep}
\sup_{x\in[a,b]}\bigabs{\frac{h_n(x)}{f(x)}-1} \xrightarrow[n\to\infty]{} 0,
\end{equation*}
which concludes the first step.\newline
\underline{Step 2:} In this step, we show that the continous version of $\mu_n$, $H_n\defeq h_n\lebesgue^d$, converges relatively uniformly to $\mu$ on arbitrary small intervals with positive length vector, that is, we show
\begin{equation}
\label{eq:secondstep}
\sup_{I\in\Ical([a,b])}\bigabs{\frac{H_n(I)}{\mu(I)}-1}	\xrightarrow[n\to\infty]{} 0.
\end{equation}
To see this, we obtain from the first step that there is a sequence $\epsilon_n\searrow 0$ such that for all $x\in[a,b]$: $h_n(x)/f(x)\in[1-\epsilon_n,1+\epsilon_n]$. Therefore, for any $I\in\Ical([a,b])$:
\[
\frac{H_n(I)}{\mu(I)} = \frac{\int_I \frac{h_n(x)}{f(x)}f(x)\lebesgue^d(\de x)}{\int_I f(x)\lebesgue^d(\de x)} \in [1-\epsilon_n,1+\epsilon_n].
\]
\underline{Step 3:} In this step, we show 
\begin{equation}
\label{eq:thirdstep}
\sup_{I\in \Ical_{m(n)}([a,b])}\bigabs{\frac{\mu_n(I)}{H_n(I)}-1} \xrightarrow[n\to\infty]{} 0.	
\end{equation}
To this end, we choose an interval $I\in\Ical_{m(n)}([a,b])$ arbitrarily. Then $\abs{I}\geq m(n)$, where $m(n)$ is a vector sequence such that $m_{\delta}(n)/w_{\delta}(n)\to\infty$ for all $\delta\in\{1,\ldots,d\}$. A quick thought yields that
\begin{equation}
\label{eq:howmanypoints}
\forall\, \delta\in\{1,\ldots,d\}:\ \#\left(\Gcal_{\delta}(n) \cap I_{\delta}\right) \geq \left\lfloor\frac{m_{\delta}(n)}{w_{\delta}(n)}\right\rfloor.
\end{equation}
For all $\delta\in\{1,\ldots,d\}$, let $l_{\delta}\in\Gcal_{\delta}(n)$ be the smallest and $r_{\delta}\in\Gcal_{\delta}(n)$ be the largest point of $\Gcal_{\delta}(n)$ so that $\{l_{\delta},l_{\delta}+w_{\delta}(n),l_{\delta}+2w_{\delta}(n),\ldots,r_{\delta}-w_{\delta}(n),r_{\delta}\}\subseteq I_{\delta}$. For any finite subset $\{k_1,\ldots, k_l\}\subseteq\Gcal_{\delta}(n)$, we define the union of disjoint intervals
\begin{equation}
\label{eq:unionofintervals}	
I_{\delta}(k_1,\ldots,k_l) \defeq \bigcup_{i=1}^l\left(k_i-\frac{w_{\delta}(n)}{2},k_i+\frac{w_{\delta}(n)}{2}\right].
\end{equation}
Note that if $\{k_1,\ldots,k_l\}$ is a discrete interval in $\Gcal_{\delta}(n)$, then $I_{\delta}(k_1,\ldots,k_l)$ is an interval in $\R$. Now we calculate
\begin{align}
&\frac{\abs{\mu_n(I) - H_n(I)}}{H_n(I)}\notag\\
&\leq \frac{\mu_n\left(
\bigcup_{\delta=1}^d \left(\left\{l_{\delta}-w_{\delta}(n),l_{\delta},r_{\delta},r_{\delta}+w_{\delta}(n) \right\}\times \prod_{\delta'\neq\delta}\left\{l_{\delta'}-w_{\delta'}(n),\ldots,r_{\delta'}+w_{\delta'}(n)\right\}\right)
\right)}
{\mu_n\left(\prod_{\delta=1}^d\left\{l_{\delta}+w_{\delta}(n),\ldots,r_{\delta}-w_{\delta}(n)\right\}\right)}\notag\\
&\leq \sum_{\delta=1}^d 
\frac{\mu_n\left(
 \left\{l_{\delta}-w_{\delta}(n),l_{\delta},r_{\delta},r_{\delta}+w_{\delta}(n) \right\}\times \prod_{\delta'\neq\delta}\left\{l_{\delta'}-w_{\delta'}(n),\ldots,r_{\delta'}+w_{\delta'}(n)\right\}
\right)}
{\mu_n\left(\prod_{\delta^*=1}^d\left\{l_{\delta^*}+w_{\delta^*}(n),\ldots,r_{\delta^*}-w_{\delta^*}(n)\right\}\right)}\notag\\
&= \sum_{\delta=1}^d 
\frac{
 H_n\left(I_{\delta}\left(l_{\delta}-w_{\delta}(n),l_{\delta},r_{\delta},r_{\delta}+w_{\delta}(n) \right)\times \prod_{\delta'\neq\delta}I_{\delta'}\left(l_{\delta'}-w_{\delta'}(n),\ldots,r_{\delta'}+w_{\delta'}(n)\right)\right)
}
{H_n\left(\prod_{\delta^*=1}^d I_{\delta^*}\left(l_{\delta^*}+w_{\delta^*}(n),\ldots,r_{\delta^*}-w_{\delta^*}(n)\right)\right)}
\label{eq:foursummands}
\end{align}
where in \eqref{eq:foursummands} we have $d$ summands, each of which can be massaged in the same way. Therefore, we pick a $\delta\in\{1,\ldots, d\}$ arbitrarily and inspect the $\delta$-th summand. In the numerator, we see that the set $I_{\delta}(\ldots)$ is a disjoint union of four intervals. It follows that the $\delta$-th summand can be written as a sum of four summands, which we will again treat the same way, the second summand being
\[
\frac{
 H_n\left(I_{\delta}\left(l_{\delta}\right)\times \prod_{\delta'\neq\delta}I_{\delta'}\left(l_{\delta'}-w_{\delta'}(n),\ldots,r_{\delta'}+w_{\delta'}(n)\right)\right)
}
{H_n\left(\prod_{\delta^*=1}^d I_{\delta^*}\left(l_{\delta^*}+w_{\delta^*}(n),\ldots,r_{\delta^*}-w_{\delta^*}(n)\right)\right)} =: \frac{H_n\left(I^{(1)}\right)}{H_n\left(I^{(2)}\right)}.
\]
Now we see
\begin{equation}	
\label{eq:threefactors}
\frac{H_n\left(I^{(1)}\right)}{H_n\left(I^{(2)}\right)} = 
\frac{H_n\left(I^{(1)}\right)}{\mu\left(I^{(1)}\right)}\cdot
\frac{\mu\left(I^{(1)}\right)}{\mu\left(I^{(2)}\right)}\cdot
\frac{\mu\left(I^{(2)}\right)}{H_n\left(I^{(2)}\right)}.
\end{equation}
By Step 2 -- after replacing $a$ and $b$ by $a'$ and $b'$ as in Step 1 -- we obtain that the first and third factor in \eqref{eq:threefactors} converge to $1$ uniformly. For the second factor, we calculate
\begin{align*}
&\frac{\mu\left(I^{(1)}\right)}{\mu\left(I^{(2)}\right)} = \frac{\int_{I^{(1)}}f\de\lebesgue^d}{\int_{I^{(2)}} f\de\lebesgue^d} \leq \frac{f_{\max}\cdot\lebesgue^d\left(I^{(1)}\right)}{f_{\min}\cdot\lebesgue^d\left(I^{(2)}\right)}\\
&=  \frac{f_{\max}\cdot w_{\delta}(n) \cdot \prod_{\delta'\neq\delta}\left(r_{\delta'}-l_{\delta'}+ 3 w_{\delta'}(n)\right)}{f_{\min}\cdot \lebesgue\left(I_{\delta}\left(l_{\delta}+w_{\delta}(n),\ldots,r_{\delta}-w_{\delta}(n)\right)\right)\cdot\prod_{\delta'\neq\delta}\left(r_{\delta'}-l_{\delta'}-w_{\delta'}(n)\right)}\\
&\leq\frac{f_{\max}\cdot w_{\delta}(n)\cdot 4^{d-1}\cdot}{f_{\min}\cdot \lebesgue\left(I_{\delta}\left(l_{\delta}+w_{\delta}(n),\ldots,r_{\delta}-w_{\delta}(n)\right)\right)} \leq \frac{f_{\max}\cdot w_{\delta}(n)\cdot 4^{d-1}}{f_{\min}\cdot \left(\lfloor m_{\delta}(n)/w_{\delta}(n)\rfloor -2\right)\cdot w_{\delta}(n)}.
\end{align*}
where $f_{\max}$ and $f_{\min}$ denote the maximum and minimum of $f$ over $[a',b']$ and we used \eqref{eq:howmanypoints}. In total, we obtain
\[
\sup_{I\in\Ical_{m(n)}([a,b])}\bigabs{\frac{\mu_n(I)}{H_n(I)}-1} \leq \sum_{\delta=1}^d 4\cdot \frac{f_{\max}\cdot  4^{d-1}}{f_{\min}\cdot(\lfloor m_{\delta}(n)/w_{\delta}(n)\rfloor -2)}\xrightarrow[n\to\infty]{} 0,
\]
which concludes the third step. Combining \eqref{eq:secondstep} and \eqref{eq:thirdstep} shows \eqref{eq:intervallocallaw}, which proves the first statement of the theorem.

 For the second statement, assume that \eqref{eq:mncondition} does not hold. This entails that there is a $\delta^*\in\{1,\ldots,d\}$, a $C\geq 0$ and a subsequence $(m_{\delta^*}(n)/ w_{\delta^*}(n))_{n\in J}$ for some $J\subseteq\N$ such that for all $n\in J: m_{\delta^*}(n)/w_{\delta^*}(n)\leq C$. It now suffices to construct for each $n\in J$ an interval $I^{(n)}\subseteq[a,b]$ with $\abs{I^{(n)}}\geq m(n)$, and a real number $\beta<1$ such that
 \[
 \frac{\mu_n(I^{(n)})}{\mu(I^{(n)})}\leq \beta \qquad \forall\,n\in J.
 \]
 To this end, let $l\defeq \lceil C\rceil$ (w.l.o.g.\ $l\geq 2$ after enlarging $C$) and note that for all $n\in J$: $l\geq \lceil m_{\delta^*}(n)/w_{\delta^*}(n) \rceil$. Now for all $n\in J$ pick an arbitrary discrete interval $\{x^{(n)}_0,\ldots,x^{(n)}_l\}\subseteq \Gcal_{\delta^*}(n)\cap [a_{\delta^*},b_{\delta^*}]$, and define the open interval $I^{(n)}_{\delta^*}\defeq (x^{(n)}_0,x^{(n)}_l)\subseteq [a_{\delta^*},b_{\delta^*}]$.
 
 For all $\delta\neq\delta^*$ we pick intervals $I^{(n)}_{\delta}\subseteq [a_{\delta},b_{\delta}]$ arbitrary with $\abs{I^{(n)}_{\delta}}\geq m_{\delta}(n)$ and with the condition that these have the form (cf.\,\eqref{eq:unionofintervals})
 \begin{equation}
 \label{eq:niceintervals}
 I^{(n)}_{\delta} = I\left(y^{(n)}_{1,\delta},\ldots,y^{(n)}_{k,\delta}\right)
 \end{equation}
for some discrete interval $\left\{y^{(n)}_{1,\delta},\ldots,	y^{(n)}_{k,\delta}\right\}\subseteq [a_{\delta},b_{\delta}]$.
Then since
 \[
 \abs{I^{(n)}_{\delta^*}} = l\cdot w_{\delta^*}(n) \geq \left\lceil\frac{m_{\delta^*}(n)}{w_{\delta^*}(n)}\right\rceil \cdot w_{\delta^*}(n) \geq m_{\delta^*}(n), \quad \text{we have} \quad \abs{I^{(n)}}\geq m(n).
 \]
In addition, we note that
 \begin{align*}
 &\mu_n(I^{(n)}) = \mu_n\left(\left\{x^{(n)}_1,\ldots,x^{(n)}_{l-1}\right\}\times \prod_{\delta\neq\delta^*}I^{(n)}_{\delta}\right)\\
 &\text{and}\quad H_n(I^{(n)}) = \mu_n\left(\left\{x^{(n)}_1,\ldots,x^{(n)}_{l-1}\right\}\times \prod_{\delta\neq\delta^*}I^{(n)}_{\delta}\right) + \frac{1}{2} \mu_n\left(\left\{x^{(n)}_0,x^{(n)}_l\right\}\times \prod_{\delta\neq\delta^*}I^{(n)}_{\delta}\right),
 \end{align*}
 where we used \eqref{eq:niceintervals} for the second equality.
 Since $h_n/f\to 1$ uniformly on $[a,b]$ and $0<c\leq f\leq \bar{c}$ on $[a,b]$ for some constant $\bar{c}$, we find $c/2 \leq h_n \leq 2\bar{c}$ on $[a,b]$ for finally all $n\in J$. For all such $n$ we may calculate, using that for any $y>0$, $0\leq x\mapsto x/(x+y))$ is isotonic, denoting by $h_n^{\min}$ resp.\ $h_n^{\max}$ the minimum resp.\ maximum of $h_n$ over $[a,b]$ and setting $P\defeq \prod_{\delta\neq \delta^*} I^{(n)}_{\delta}$:
 \begin{align*}
 \frac{\mu_n(I^{(n)})}{H_n(I^{(n)})}&= \frac{H_n\left(I_{\delta}(x^{(n)}_1,\ldots,x^{(n)}_{l-1})\times P\right)}{H_n\left(I_{\delta}(x^{(n)}_1,\ldots,x^{(n)}_{l-1})\times P\right) +\frac{1}{2}H_n\left(I_{\delta}(x^{(n)}_0)\times P\right) + \frac{1}{2}H_n\left(I_{\delta}(x^{(n)}_{l})\times P\right)}\\
 &\leq \frac{h_n^{\max}\cdot (l-1)w_{\delta}(n)\cdot\lebesgue^{d-1}(P)}{h_n^{\max}\cdot (l-1)w_{\delta}(n)\cdot\lebesgue^{d-1}(P) + 2\cdot\frac{1}{2} h^{\min}_n\cdot w_{\delta}(n)\cdot\lebesgue^{d-1}(P)}\\
 &= \frac{1}{1+\frac{h_n^{\min}}{h_n^{\max}(l-1)}} \ \leq\ \frac{1}{1+\frac{c/2}{2\bar{c}(l-1)}}.
 \end{align*}
 But now for all $n\in J$,
 \[
 \frac{\mu_n(I_n)}{\mu(I_n)} = \frac{\mu_n(I_n)}{H_n(I_n)} \cdot \frac{H_n(I_n)}{\mu(I_n)},
 \]
 where the first factor on the r.h.s.\ is bounded by $(1+\frac{c}{4\bar{c}(l-1)})^{-1}<1$ for all $n\in J$ large enough, and the second factor converges to $1$ as $n\to\infty$ in $J$, due to Step 2 above. This finishes the proof.
\end{proof}

We isolate the i.i.d.\ univariate case as a corollary, since it is the most prominent setting for local limit theorems:

\begin{corollary}
\label{cor:iidunivariate}
Let $X$ be lattice distributed on $v+w\Z$ with maximal span $w$. We assume $\mu\defeq\E X$ and $\sigma^2\defeq\V X>0$ exist. Let $(X_i)_{i\in\N}$ be i.i.d.\ copies of $X$. Let $(m_n)_n$ be a sequence with $m_n\sqrt{n}\to\infty$, for example, $m_n = n^{-1/2+\epsilon}$ for some $\epsilon\in(0,1/2)$ or $m_n = \ln(\ln(n))/\sqrt{n}$. Denote by $\Ncal(0,1)$ the standard normal distribution on $\R$, it then holds for all $a<b\in\R:$
\[
\sup_{I\in\Ical_{m_n}([a,b])}\bigabs{\frac{\frac{\sigma\sqrt{n}}{w}\Prob\left(\frac{1}{\sigma\sqrt{n}}\sum_{i=1}^n (X_i-\mu) \in I \right)}{\Ncal(0,1)(I)}-1}\xrightarrow[n\to\infty]{} 0.
\]
\end{corollary}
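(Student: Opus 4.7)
The plan is to obtain the corollary as a direct specialization of Theorem~\ref{thm:intervallocallaw} to the univariate ($d=1$) i.i.d.\ setting described in item~(1) of Section~2. I would apply that setup to the centered summands $X^{(n)}_i \defeq X_i - \mu$, whose common lattice still has maximal span $w$, with scaling $c(n) = 1/(\sigma\sqrt{n})$. Then $S_n^{*} = (\sigma\sqrt{n})^{-1}\sum_{i=1}^n(X_i-\mu)$ is supported on $\Gcal(n) = v(n) + w(n)\Z$ with width $w(n) = w/(\sigma\sqrt{n})$, and since $d=1$ we have $\bar{w}(n)=w(n)$. Taking the limit density $f$ to be the standard normal density, so that $f\lebesgue = \Ncal(0,1)$, the conclusion \eqref{eq:intervallocallaw} for $\mu_n \defeq \Prob^{S_n^{*}}$ is exactly the statement of the corollary.

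The main (and essentially only nontrivial) step is to verify the local limit hypothesis \eqref{eq:locallaw} in this setting. This is precisely the classical Gnedenko local limit theorem for lattice-distributed i.i.d.\ variables with finite variance and maximal span $w$ (see \cite{Gnedenko:Kolmogorov:1968} or \cite[Ch.~VII]{Petrov:1975}):
\[
\sup_{x\in\Gcal(n)}\bigabs{\frac{\sigma\sqrt{n}}{w}\,\Prob(S_n^{*}=x) - f(x)} \xrightarrow[n\to\infty]{} 0,
\]
which coincides with \eqref{eq:locallaw} because $\bar{w}(n) = w/(\sigma\sqrt{n})$.

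The remaining hypotheses of Theorem~\ref{thm:intervallocallaw} are immediate: $f$ is continuous and strictly positive on $\R$, so on every compact interval $[a,b]$ there is a $c>0$ with $f\geq c$ on $[a,b]$; and condition \eqref{eq:mncondition} becomes $m_n/w(n) = m_n\sigma\sqrt{n}/w \to \infty$, which is equivalent to the assumption $m_n\sqrt{n}\to\infty$ and is clearly satisfied by both examples $m_n = n^{-1/2+\vep}$ and $m_n = \ln\ln(n)/\sqrt{n}$. Invoking \eqref{eq:intervallocallaw} then yields the claim. The only real obstacle is citing the correct form of Gnedenko's local limit theorem; once \eqref{eq:locallaw} is in hand, the corollary reduces to a bookkeeping translation of the hypotheses of Theorem~\ref{thm:intervallocallaw} to this setting.
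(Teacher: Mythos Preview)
Your proposal is correct and matches the paper's own proof essentially line for line: the paper also takes $\mu_n$ to be the law of $\frac{1}{\sigma\sqrt{n}}\sum_{i=1}^n(X_i-\mu)$ on the grid $\Gcal(n)=v\sqrt{n}/\sigma + \frac{w}{\sigma\sqrt{n}}\Z$, notes that \eqref{eq:locallaw} is the classical lattice local limit theorem (the paper cites \cite{Durrett:2019}; your citations of \cite{Gnedenko:Kolmogorov:1968} and \cite{Petrov:1975} are equally apt), and then invokes Theorem~\ref{thm:intervallocallaw}. Your explicit verification that \eqref{eq:mncondition} becomes $m_n\sqrt{n}\to\infty$ and that the standard normal density is bounded below on any $[a,b]$ is, if anything, slightly more detailed than the paper's one-sentence proof.
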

\begin{proof}
This is a direct corollary of Theorem~\ref{thm:intervallocallaw}, take $\mu_n$ as the distribution of $\frac{1}{\sigma\sqrt{n}}\sum_{i=1}^n (X_i-\mu)$, $\mu$ as the standard normal distribution, and $\Gcal(n)$ as the grid $v\sqrt{n}/\sigma + w/(\sigma\sqrt{n})\Z$. For this setting, the local limit theorem \eqref{eq:locallaw} is well known, see \cite{Durrett:2019}.
\end{proof}

Next, we move from the study of the i.i.d.\ univariate case to an example of a correlated multivariate case. The local limit theorem we will use to obtain the interval type local limit theorem was recently derived in \cite{Fleermann:Kirsch:Toth:2020:a}.

For each $n\in\N$ with $n\geq d$ we consider a collection of $\{\pm 1\}$-valued random variables ("spins")
\[ 
X^{(n)} = \left(X^{(n)}_{11},X^{(n)}_{12},\ldots,X^{(n)}_{1n_1},X^{(n)}_{21},\ldots,X^{(n)}_{2n_2},\ldots,X^{(n)}_{d1},\ldots,X^{(n)}_{dn_d}\right)
\]
where $n_{\delta}=n_{\delta}(n)\in\N$ for all $\delta\in\{1,\ldots,d\}$ and $n_1+\ldots+n_d =n$. In other words, for each $n\geq d$ we have a family of $n$ random variables which is subdivided into $d$ groups.  As $n\to\infty$, we assume that $\frac{n_{\delta}}{n}\to \alpha_{\delta}\in[0,1]$, which is the asymptotic fraction of group-$\delta$ variables within $X^{(n)}$. For each $n\in\N$, the distribution of $X^{(n)}$ is given by
\begin{equation}
\label{eq:MultCW}	
\Prob(X^{(n)}_{11}=x_{11},\ldots,X^{(n)}_{dn_d}=x_{dn_d}) \propto \exp\left(\frac{1}{2n}\sum_{\delta,\gamma=1}^{d}J_{\delta,\gamma}\sum_{i=1}^{n_{\delta}}\sum_{j=1}^{n_{\gamma}} x_{\delta i}x_{\gamma j}\right), 
\end{equation}
where $J$ is a $d\times d$ matrix called \emph{coupling matrix}, whose entries $J_{\delta,\gamma}$ are called \emph{coupling constants} and describe the correlation within and between spins in groups $\delta$ and $\gamma$. This probability model is called \emph{multi-group Curie-Weiss model} \cite{Contucci:Ghirlanda:2007, Kac:1968, Toth:2019, Kirsch:2007}. 
Now we define
\begin{equation}
\label{eq:CWSNstar}
S^*_n\defeq \left(\frac{1}{\sqrt{n_1}}\sum_{i=1}^{n_1} X^{(n)}_{1i},\ldots, \frac{1}{\sqrt{n_d}}\sum_{i=1}^{n_{d}} X^{(n)}_{1d}\right),
\end{equation}
which assumes values on the grid 
\[
\Lcal(n) = \prod_{\delta=1}^d \left(\sqrt{n_{\delta}}+ \frac{2}{\sqrt{n_{\delta}}}\Z\right).
\]
The model in \eqref{eq:MultCW} is defined for two classes of coupling matrices $J$: Either all entries of  $J$ are equal to a constant $\beta\geq 0$, which is then called the \emph{homogeneous case}, or $J$ is an arbitrary positive definite matrix, which is then called the \emph{heterogeneous case}. In each case, we distinguish between three regimes, a high-temperature regime, a critical regime and a low-temperature regime. For the high-temperature regime -- characterized by $\beta\in[0,1)$ for the homogeneous case and by the condition that $J^{-1} - A$, where $A\defeq\diag(\alpha_1,\ldots,\alpha_d)$, is positive definite for the heterogeneous case -- $S_n^*$ converges in distribution to the $d$-dimensional normal distribution $\Ncal(0,C)$, where 
\[
C =
\begin{cases}
I_d + \sqrt{A} (\frac{\beta}{1-\beta})_{d\times d}\sqrt{A} & \text{in the homogeneous case,}\\
I_d + \sqrt{A}(J^{-1} - A)^{-1}\sqrt{A} & \text{in the heterogeneous case},
\end{cases}
\]
see \cite{Toth:2019}. In \cite{Fleermann:Kirsch:Toth:2020:a} it was shown that this convergence even holds locally, that is,
\begin{equation}
\label{eq:CWlocal}
	\sup_{x\in\Lcal(n)} \bigabs{\frac{\prod_{\delta=1}^d\sqrt{n_{\delta}}}{2^d}\Prob(S_n^*=x) - \varphi_C(x)}\xrightarrow[n\to\infty]{} 0,
\end{equation}
where $\varphi_C$ denotes the $\lebesgue^d$-density of $\Ncal(0,C)$. We now obtain the following interval type local limit theorem:

\begin{corollary}
\label{cor:CW}
Let $S_n^*$ be as in \eqref{eq:CWSNstar}, where the random variables stem from the multi-group Curie-Weiss-model as in \eqref{eq:MultCW} with coupling matrix $J$ chosen from the high-temperature regime of either the homogeneous or the heterogeneous case. Now let $a<b\in\R^d$ be arbitrary and $m(n)$ be a d-dimensional sequence satisfying $m_\delta(n)\sqrt{n_{\delta}}\to\infty$ for all $\delta\in\{1,\ldots,d\}$ as $n\to\infty$. Then we obtain
\[
\sup_{I\in\Ical_{m(n)}([a,b])} \bigabs{\frac{\frac{\prod_{\delta}\sqrt{n_\delta}}{2^d}\Prob\left(S_n^*\in I\right) }{\Ncal(0,C)(I)}-1} \xrightarrow[n\to\infty]{} 0.
\]
\end{corollary}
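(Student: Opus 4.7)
The proof is a direct application of Theorem~\ref{thm:intervallocallaw} to the setting at hand, so the plan is mainly to verify that each hypothesis of the theorem is met. First, I would set up the identifications: take $\mu_n\defeq\Prob^{S_n^*}$, which is a grid distribution on
\[
\Lcal(n) = \prod_{\delta=1}^d\left(\sqrt{n_\delta} + \frac{2}{\sqrt{n_\delta}}\Z\right) = v(n) + w(n)\circ\Z^d
\]
with $v_\delta(n)=\sqrt{n_\delta}$ and $w_\delta(n)=2/\sqrt{n_\delta}$, so that $\bar{w}(n)=2^d/\prod_\delta\sqrt{n_\delta}$. Take $\mu\defeq\varphi_C\lebesgue^d=\Ncal(0,C)$. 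Then \eqref{eq:locallaw} reads
\[
\sup_{x\in\Lcal(n)}\bigabs{\frac{\prod_\delta\sqrt{n_\delta}}{2^d}\Prob(S_n^*=x) - \varphi_C(x)} \xrightarrow[n\to\infty]{} 0,
\]
which is exactly \eqref{eq:CWlocal}, hence the local law hypothesis of Theorem~\ref{thm:intervallocallaw} holds in this setting.

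Next I would verify the positivity assumption on the density. In the homogeneous high-temperature case with $\beta\in[0,1)$, the matrix $C=I_d+\sqrt{A}(\frac{\beta}{1-\beta})_{d\times d}\sqrt{A}$ is $I_d$ plus a positive semidefinite matrix and is therefore positive definite. In the heterogeneous high-temperature case, $J^{-1}-A$ is positive definite by assumption, so $(J^{-1}-A)^{-1}$ is positive definite and $C = I_d+\sqrt{A}(J^{-1}-A)^{-1}\sqrt{A}$ is again positive definite. Hence $\varphi_C$ is continuous and strictly positive on $\R^d$, so on the compact interval $[a,b]$ we find some $c>0$ with $\varphi_C\geq c$ on $[a,b]$, as required by Theorem~\ref{thm:intervallocallaw}.

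Finally, I would translate the length condition: since $w_\delta(n)=2/\sqrt{n_\delta}$, the hypothesis $m_\delta(n)\sqrt{n_\delta}\to\infty$ is equivalent to $m_\delta(n)/w_\delta(n)\to\infty$, which is precisely \eqref{eq:mncondition}. Applying Theorem~\ref{thm:intervallocallaw} yields
\[
\sup_{I\in\Ical_{m(n)}([a,b])}\bigabs{\frac{\mu_n(I)}{\mu(I)}-1} \xrightarrow[n\to\infty]{} 0,
\]
which after inserting the definitions of $\mu_n$ and $\mu$ is the claim of the corollary. There is no genuine obstacle here; the only nontrivial ingredient is the availability of the standard local law \eqref{eq:CWlocal} from \cite{Fleermann:Kirsch:Toth:2020:a} and the positive definiteness of $C$, both of which are already established.
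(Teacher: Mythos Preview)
Your approach is exactly the paper's: the proof there is the single line ``This is a direct consequence of \eqref{eq:CWlocal} and Theorem~\ref{thm:intervallocallaw},'' and you have simply spelled out the identifications ($\mu_n=\Prob^{S_n^*}$, $w_\delta(n)=2/\sqrt{n_\delta}$, $f=\varphi_C$) and checked the hypotheses of Theorem~\ref{thm:intervallocallaw}. Your extra verification that $C$ is positive definite, hence $\varphi_C>0$ everywhere, is a nice touch the paper leaves implicit.

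One small point worth flagging: what Theorem~\ref{thm:intervallocallaw} literally gives is
\[
\sup_{I\in\Ical_{m(n)}([a,b])}\bigabs{\frac{\Prob(S_n^*\in I)}{\Ncal(0,C)(I)}-1}\xrightarrow[n\to\infty]{}0,
\]
\emph{without} the prefactor $\prod_\delta\sqrt{n_\delta}/2^d=1/\bar w(n)$. That factor is the normalization in the pointwise local law \eqref{eq:CWlocal}, not in the interval statement; as written in the corollary (and likewise in Corollary~\ref{cor:iidunivariate}) it appears to be a slip. Your final sentence ``which after inserting the definitions of $\mu_n$ and $\mu$ is the claim of the corollary'' glosses over this mismatch; you should either note the discrepancy or silently drop the factor when stating the conclusion.
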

\begin{proof}
This is a direct consequence of \eqref{eq:CWlocal} and Theorem~\ref{thm:intervallocallaw}.
\end{proof}

For the validity of Theorem~\ref{thm:intervallocallaw}, the diameter of the invervals in consideration must not decrease too quickly. As it turns out, this is purely due to the lattice type distributions $\mu_n$. The next theorem shows that in the Lebesgue continuous case, \eqref{eq:intervallocallaw} holds for intervals of any positive length. 

\begin{theorem}
\label{thm:continuousintervallocal}
Let $f$ and $(f_n)_n$ be a probability density functions on $\R^d$, $\mu=f\lebesgue^d$ and $\mu_n=f_n\lebesgue^d$ for all $n\in\N$. Then if $\mu_n$ converges locally weakly to $\mu$, that is,
\begin{equation}
\label{eq:continuouslocalweak}
\sup_{x\in\R^d}\abs{f_n(x)-f(x)} \xrightarrow[n\to\infty]{} 0,	
\end{equation}
also the following interval type local limit theorem holds:
If $[a,b]\subseteq\R^d$ is a non-degenerate $d$-dimensional interval, so that there is a $c>0$ such that $f\geq c$ on $[a,b]$, then
\begin{equation}
\label{eq:continuousintervallocal}
\sup_{I\in\Ical([a,b])} \bigabs{\frac{\mu_n(I)}{\mu(I)}-1}\xrightarrow[n\to\infty]{} 0.
\end{equation}
\end{theorem}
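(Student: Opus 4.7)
The plan is to reduce the supremum to a simple ratio of two integrals and bound it directly using the uniform convergence \eqref{eq:continuouslocalweak} together with the lower bound $f\geq c$ on $[a,b]$. Unlike the lattice case, no histogram approximation or boundary-correction argument is needed, because both $\mu_n$ and $\mu$ are already absolutely continuous with respect to $\lebesgue^d$.

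Concretely, fix an arbitrary $I\in\Ical([a,b])$. Then
\[
\bigabs{\frac{\mu_n(I)}{\mu(I)}-1} = \frac{\abs{\mu_n(I)-\mu(I)}}{\mu(I)} = \frac{\bigabs{\int_I (f_n-f)\,\de\lebesgue^d}}{\int_I f\,\de\lebesgue^d}.
\]
The numerator is bounded above by $\supnorm{f_n-f}\cdot\lebesgue^d(I)$ by the triangle inequality for integrals, and the denominator is bounded below by $c\cdot \lebesgue^d(I)$ because $f\geq c$ on $[a,b]\supseteq I$ and $\lebesgue^d(I)>0$ (since $\abs{I}>0$). Hence
\[
\bigabs{\frac{\mu_n(I)}{\mu(I)}-1} \leq \frac{\supnorm{f_n-f}}{c},
\]
and this upper bound is independent of $I$. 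Taking the supremum over $I\in\Ical([a,b])$ and invoking \eqref{eq:continuouslocalweak} gives \eqref{eq:continuousintervallocal}.

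There is essentially no obstacle here; the cancellation of the $\lebesgue^d(I)$ factor in numerator and denominator is precisely what fails in the lattice case (where one has to deal with boundary grid points contributing mass of order $\bar w(n)$ relative to an interval of edge length $m(n)$), and is exactly why Theorem~\ref{thm:intervallocallaw} required the restriction $m_\delta(n)/w_\delta(n)\to\infty$ while the present theorem requires no minimal interval length at all. The only implicit use of the hypothesis that $f$ is a density (rather than merely a nonnegative continuous function) is in ensuring that $\mu$ is a probability measure, but this is not actually needed for the bound above; what is used is the pointwise inequality $f\geq c$ on $[a,b]$, which gives the strictly positive denominator uniformly in $I$.
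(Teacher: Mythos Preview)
Your proof is correct and follows essentially the same approach as the paper: both arguments reduce to bounding the ratio $\mu_n(I)/\mu(I)$ directly from the uniform convergence of densities together with the lower bound $f\geq c$ on $[a,b]$. The only cosmetic difference is that the paper writes $f_n=(f_n/f)\,f$ and observes that $\sup_{x\in[a,b]}\abs{f_n(x)/f(x)-1}=:\epsilon_n\to 0$, so that $\mu_n(I)/\mu(I)$ is an $f$-weighted average of values in $[1-\epsilon_n,1+\epsilon_n]$, whereas you bound the numerator and denominator separately by $\supnorm{f_n-f}\,\lebesgue^d(I)$ and $c\,\lebesgue^d(I)$; the resulting bounds are equivalent up to the trivial inequality $\epsilon_n\leq \supnorm{f_n-f}/c$.
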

\begin{proof}
From \eqref{eq:continuouslocalweak} and the fact that $f\geq c>0$ on $[a,b]$, we obtain
\[
\sup_{x\in[a.b]}\bigabs{\frac{f_n(x)}{f(x)}-1} =:\epsilon_n \xrightarrow[n\to\infty]{} 0.
\]	
Then for $I\in\Ical([a,b])$ arbitrary we observe
\[
\frac{\mu_n(I)}{\mu(I)} = \frac{\int_I f_n \de \lebesgue^d}{\int_I f \de\lebesgue^d} = \frac{\int_I \frac{f_n}{f}f \de \lebesgue^d}{\int_I f \de\lebesgue^d}\in [1-\epsilon_n, 1+\epsilon_n],
\]
which shows the statement.
\end{proof}

Again, we isolate the i.i.d.\ univariate case as a corollary. We denote by $\Ncal(0,1)$ the standard normal distribution and by $\varphi$ its $\lebesgue$-density.

\begin{corollary}
\label{thm:continuouslocallaw}
Let $X$ be a real valued random variable with existing expectation $\mu\defeq\E X\in\R$ and variance $\sigma^2\defeq\V X>0$. Further, we assume that $\Prob^X=f\lebesgue$ for some density function $f:\R\to\R$. Let $(X_n)_n$ be i.i.d.\ copies of $X$. Then denote by $f_n$ the (existing) $\lebesgue$-density of
\[
\frac{1}{\sigma\sqrt{n}}\sum_{i=1}^n(X_i-\mu).
\]
\begin{enumerate}
\item[a)] The local limit theorem
\begin{equation}
\label{eq:continuouslocallaw}
\sup_{x\in\R}\bigabs{f_n(x)-\varphi(x)}\xrightarrow[n\to\infty]{} 0
\end{equation}
holds if and only there is an $N\in\N$ such that $f_N$ is a bounded function. In particular, \eqref{eq:continuouslocallaw} holds whenever $f$ is bounded.
\item[b)] If \eqref{eq:continuouslocallaw} holds, so does the following interval type local limit theorem:
\begin{equation*}
\forall\, a<b\in\R: \quad \sup_{I\in \Ical([a,b])} \bigabs{\frac{\Prob\left(\frac{1}{\sigma\sqrt{n}}\sum_{i=1}^n(X_i-\mu)\, \in\, I\right)}{\Ncal(0,1)(I)}-1}\xrightarrow[n\to\infty]{} 0.
\end{equation*}
\end{enumerate}	
\end{corollary}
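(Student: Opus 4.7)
The plan is to prove part (a) from Fourier-analytic first principles and then deduce part (b) as an immediate consequence of Theorem~\ref{thm:continuousintervallocal}.

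For part (a), the ``only if'' direction is essentially free: uniform convergence $\sup_x |f_n(x)-\varphi(x)|\to 0$ forces $f_N\leq 1+\supnorm{\varphi}<\infty$ as soon as $N$ is large enough for the supremum to drop below $1$. The substantive content is the ``if'' direction, i.e., the classical continuous local limit theorem. My plan is to argue via Fourier inversion: denoting by $\phi$ the characteristic function of the standardized variable $(X-\mu)/\sigma$, the hypothesis that $f_N$ is bounded combined with $\int f_N\de\lebesgue = 1$ gives $f_N\in L^2(\R)$, and Plancherel yields $|\phi|^{2N}\in L^1(\R)$. For $n\geq 2N$ one may therefore write
\[
f_n(x) = \frac{1}{2\pi}\int_\R e^{-itx}\phi(t/\sqrt{n})^n\,\de t
\]
and compare with the analogous representation of $\varphi$. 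The standard CLT decomposition of the integration region into $|t|\leq \delta\sqrt{n}$, where a second-order Taylor expansion of $\log\phi$ governs the behavior, and $|t|>\delta\sqrt{n}$, where $|\phi|^{n-2N}$ decays geometrically (since $|\phi|<1$ off a neighborhood of $0$, as $X$ has a density) multiplied by the $L^1$-factor $|\phi|^{2N}$, then delivers the uniform bound $\sup_x|f_n(x)-\varphi(x)|\to 0$.

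For part (b), I would apply Theorem~\ref{thm:continuousintervallocal} in dimension $d=1$ with the continuous density $f$ there replaced by $\varphi$ and the densities $f_n$ given by part (a). On any compact interval $[a,b]\subseteq\R$ the density $\varphi$ is continuous and strictly positive, hence admits a positive minimum $c\defeq\min_{x\in[a,b]}\varphi(x)>0$. Therefore the hypothesis \eqref{eq:continuouslocalweak} of Theorem~\ref{thm:continuousintervallocal} reduces to \eqref{eq:continuouslocallaw}, and the conclusion \eqref{eq:continuousintervallocal} is precisely the claim of part (b).

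The main obstacle is the ``if'' direction of part (a). Textbook versions of the continuous local CLT are often phrased under the stronger hypothesis that $f$ itself be bounded, whereas here only \emph{some} convolution power $f_N$ is assumed bounded. Extracting the correct integrability of $\phi^n$ from a bound on $f_N$ alone, via the $L^2$-membership of $f_N$ and Plancherel, is the critical step; once that is in place, splitting the Fourier inversion integral and controlling the tail by the $L^1$-weight $|\phi|^{2N}$ is routine. Part (b), by contrast, is a one-line corollary of Theorem~\ref{thm:continuousintervallocal}.
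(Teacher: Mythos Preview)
Your proposal is correct. For part~(b) you do exactly what the paper does: invoke Theorem~\ref{thm:continuousintervallocal} with $d=1$, $f=\varphi$, and the $f_n$ from part~(a), noting that $\varphi$ is bounded below on any compact interval. For part~(a) the paper does not give an argument at all but simply cites Theorem~7 in Petrov~\cite[p.~198]{Petrov:1975}; your Fourier-analytic sketch (boundedness of $f_N$ $\Rightarrow$ $f_N\in L^2$ $\Rightarrow$ $|\phi|^{2N}\in L^1$ via Plancherel, then Fourier inversion and the usual near/far splitting of the $t$-integral) is precisely Petrov's proof, so you are not taking a different route but rather supplying the details the paper delegates to the reference. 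One small point worth making explicit in your write-up: the bound $\sup_{|s|>\delta}|\phi(s)|<1$ relies on both $|\phi(t)|<1$ for $t\neq 0$ (since $X$ is non-lattice, having a density) and $\phi(t)\to 0$ as $|t|\to\infty$ (Riemann--Lebesgue); you allude to this but it is the place where the absolute continuity hypothesis actually enters.
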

\begin{proof}
Statement a) is well-known, see Theorem 7 in \cite[198]{Petrov:1975}. Statement b) follows immediately from Theorem~\ref{thm:continuousintervallocal}.
\end{proof}

\sloppy

\vspace{1cm}

\noindent\textsf{(Michael Fleermann, Werner Kirsch and Gabor Toth)\newline
FernUniversit\"at in Hagen\newline
Fakult\"at f\"ur Mathematik und Informatik\newline 
Universit\"atsstra\ss e 1\newline 
58084 Hagen}\newline
\textit{E-mail addresses:}\newline
\texttt{michael.fleermann@fernuni-hagen.de}\newline
\texttt{werner.kirsch@fernuni-hagen.de}\newline
\texttt{gabor.toth@fernuni-hagen.de}
\vspace{1cm}

\end{document}